\documentclass{amsart}
\usepackage{enumerate}
\usepackage{amssymb}
\usepackage{mathtools}
\usepackage{amsmath}
\usepackage{color}
\usepackage[
colorlinks=true,
linkcolor=blue,
citecolor=blue,
urlcolor=blue]{hyperref}

\numberwithin{equation}{section}

\newcommand{\normmm}[1]{{\left\vert\kern-0.25ex\left\vert\kern-0.25ex\left\vert #1
\right\vert\kern-0.25ex\right\vert\kern-0.25ex\right\vert}}

\newtheorem{thm}{Theorem}[section]
\newtheorem{lem}[thm]{Lemma}

\newtheorem{rem}[thm]{Remark}

\newtheorem{conj}[thm]{Conjecture}

\newcommand{\N}{{\mathbb N}}

\newcommand{\Z}{{\mathbb{Z}^{n}_{m}}}

\begin{document}

\title[Strong Convolution Inequality]{The Strong Naor--Schechtman Convolution Inequality on the Product of Cyclic Groups}

\author[Jiao]{Yong Jiao}
\address{School of Mathematics and Statistics, HNP-LAMA, Central South University, Changsha 410075, China}
\email{jiaoyong@csu.edu.cn}

\author[Luo]{Sijie Luo}
\address{School of Mathematics and Statistics, HNP-LAMA, Central South University, Changsha 410075, China}
\email{sijieluo@csu.edu.cn}

\author[Zanin]{Dmitriy Zanin}
\address{School of Mathematics and Statistics, HNP-LAMA, Central South University, Changsha 410075, China}
\email{d.zanin@unsw.edu.au}

\author[Zhou]{Dejian Zhou}
\address{School of Mathematics and Statistics, HNP-LAMA, Central South University, Changsha 410075, China}
\email{zhoudejian@csu.edu.cn}
\date{}

\begin{abstract}
In this paper, we use martingale methods to resolve the convolution inequality problem posed by Naor and Schechtman in \cite[Question 6.1]{N-S2016} (see also \cite{Na2016}). More precisely, we establish the following strong convolution inequality on products of finite cyclic groups. For each $1<p<\infty$ and every $n$, $m\in \mathbb{N}$ we have
\[
\begin{split}
&\sum_{\varepsilon\in \{-1,1\}^{n}}\sum_{x\in \Z}\left|E_{\{1,\cdots,n\}}f(x+\varepsilon)-E_{\{1,\cdots,n\}}f(x-\varepsilon)\right|^{p}\\
\leq& (p^{*}-1)^{p}\sum_{\varepsilon\in \{-1,1\}^{n}}\sum_{x\in \Z}\left|\varepsilon_{j}\left[E_{\{1,\cdots,n\}\setminus\{j\}}f(x+e_{j})-E_{\{1,\cdots,n\}\setminus\{j\}}f(x-e_{j})\right]\right|^{p},
\end{split}
\]
where $p^{*}=\max\{p,p/(p-1)\}$ and $E_{A}f(x)=\frac{1}{2^{n}}\sum_{\delta\in \{-1,1\}^{n}}f\left(x+\sum_{j\in A}\delta_{j}e_{j}\right)$ for every $A\subseteq \{1,\cdots,n\}$.
\end{abstract}

\subjclass[2020]{Primary 46B20; Secondary 60G42, 42C10.}
\keywords{Convolution inequality, martingales, metric $X_{p}$ inequality}
\maketitle

\section{Introduction}

The study of metric property determined only by the distance of Banach spaces forms a central theme in the Ribe program. The metric $X_{p}$ inequality introduced by Naor and Schechtman \cite{N-S2016} is a nonlinear counterpart of the Rosenthal inequality, which serves as an obstruction to certify the failure of bi-Lipschitz emebddability of $L_{q}$ into $L_{p}$ for $2<q<p<\infty$.

To make statements precisely, we introduce necessary notations that will be used throughly. For a given $n$, let $\Z$ be the $n$-folds Cartesian product of the cyclic group $\mathbb{Z}_{m}$. We denote that $\Omega_{n}=\{-1,1\}^{n}$ the $n$-dimensional hypercube. Both $\Z$ and $\Omega_{n}$ are equipped with the uniform probability measures $\nu$ and $\mu$, respectively. Let $[n]=\{1,\cdots, n\}$, and for each $j\in [n]$ we define the translation operator $T_{j}$ by
\[
T_{j}f(x)\coloneqq f(x+e_{j}),\qquad f:\Z\to \mathbb{R},
\]
and $e_{j}=(0,\cdots, 1,\cdots,0)$ is the vector with $1$ appears in the $j$-th position. It is clear that the linear operator $T_{j}$ is an isometry on $L_{p}(\Z)$ for all $p\in [1,\infty]$ with its inverse given by
\[
T^{-1}_{j}f(x)=f(x-e_{j}),\qquad f:\Z\to \mathbb{R}.
\]
The $j$-th expectation operator $E_{j}$ and the $j$-th derivative operator $D_{j}$ are defined as follows
\[
E_{j}\coloneqq \frac{T_{j}+T^{-1}_{j}}{2}\quad\mbox{and}\quad D_{j}=\frac{T_{j}-T^{-1}_{j}}{2}.
\]
It is clear that the family $\{E_{j}\}_{j=1}^{n}$ commute with each other. For each $A\subseteq [n]$, let $E_{A}\coloneqq \prod_{j\in A}E_{j}$ and $E_{\emptyset}=\mathrm{Id}$. Hence,
\[
E_{A}f(x)=\frac{1}{2^{|A|}}\sum_{\delta\in \{-1,1\}^{A}}f\left(x+\sum_{j\in A}\delta_{j}e_{j}\right)=\frac{1}{2^{n}}\sum_{\delta\in \Omega_{n}}f\left(x+\sum_{j\in A}\delta_{j}e_{j}\right),
\]
for every $f:\Z\to \mathbb{R}$, where $|A|$ stands for the cardinality of $A$. For $A\subseteq [n]$ and $\varepsilon\in \Omega_{n}$, we let
\[
T^{\varepsilon}_{A}\coloneqq \prod_{j\in A}T^{\varepsilon_{j}}_{j}.
\]

The metric $X_{p}$ inequality of Naor and Schechtman is stated as follows, which is one of the essential result in \cite{N-S2016}.
\begin{thm}[Naor--Schechtman]
For $2\leq p<\infty$, and $k$, $m\in \mathbb{N}$ with $k\in [n]$ and
\[
m\geq \frac{n^{3/2}\log(p)}{\sqrt{k}}+pn,
\]
then there exists a constant $c_{p}>0$ depending only on $p$ such that the following inequality holds for every $f:\mathbb{Z}^{n}_{4m}\to L_{p}$
\[
\begin{split}
&\frac{(p\log(p))^{-1}}{\binom{n}{k}}\sum_{\substack{S\subseteq[n] \\|S|=k}}\frac{\mathbb{E}\left[\left\|f(x+2m\varepsilon_{S})-f(x)\right\|^{p}_{L_{p}}\right]}{m^{p}}\\
\leq& c_{p}\left(\frac{k}{n}\sum_{j=1}^{n}\mathbb{E}\left\|T_{j}f(x)-f(x)\right\|^{p}_{L_{p}}+\left(\frac{k}{n}\right)^{p/2}\mathbb{E}\left\|T^{\varepsilon}_{[n]}f(x)-f(x)\right\|^{p}_{L_{p}}\right),
\end{split}
\]
where the expectation is with respect to $(x,\varepsilon)\in \mathbb{Z}^{n}_{4m}\times \{-1,1\}^{n}$ chosen unifromly at random.
\end{thm}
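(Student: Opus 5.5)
The plan follows the Naor--Schechtman strategy: reduce the metric statement to a smoothing/convolution estimate plus a Rosenthal-type inequality for sums of independent random vectors in $L_p$. We may take $f$ to be $L_p$-valued. By Fubini it suffices to treat $f:\mathbb{Z}^n_{4m}\to\mathbb{R}$, provided every constant depends only on $p$ and we work with $p$-th powers summed over coordinates.

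\emph{Step 1 (smoothing).} Write $f=(f-\mathcal{A}f)+\mathcal{A}f$, where $\mathcal{A}$ is an averaging operator. For example, take $\mathcal{A}=E_{[n]}^t$, or convolution with the uniform measure on a box of side about $m/\sqrt{k}$ in each coordinate. We then compare $f(x+2m\varepsilon_S)-f(x)$ with $\mathcal{A}f(x+2m\varepsilon_S)-\mathcal{A}f(x)$.

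\emph{Step 2 (rough part).} The error $f-\mathcal{A}f$ is controlled by the diagonal term $\mathbb{E}\|T^\varepsilon_{[n]}f-f\|_p^p$. Telescoping along $t$ steps of random diagonal moves $\varepsilon$ gives a bound of order $t^{p/2}$ times the diagonal energy. Here the $\sqrt{t}$ comes from Pisier/martingale-type estimates for random walks in $L_p$ with $p\ge2$. With $t\asymp m^2 k/n^2$, after dividing by $m^p$ this yields the factor $(k/n)^{p/2}$.

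\emph{Step 3 (smooth part).} For $\mathcal{A}f$, write $\mathcal{A}f(x+2m\varepsilon_S)-\mathcal{A}f(x)$ as a telescoping sum of $2m$ steps along $\varepsilon_S$. Each step $\sum_{j\in S}\varepsilon_j\partial_j\mathcal{A}f$ is then a Rademacher sum over $j\in S$. Averaging over random $S$ of size $k$ (close to $k/n$-selectors) and over signs, Rosenthal's inequality in $L_p$ bounds the $p$-th moment by two terms. The first is $\frac kn\sum_j\|\partial_j\mathcal{A}f\|_p^p$, which is at most $\frac kn\sum_j\|T_jf-f\|_p^p$ because $\mathcal{A}$ is a contraction commuting with $T_j$. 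The second is $(\frac kn\sum_j|\partial_j\mathcal{A}f|^2)^{p/2}$. The derivative of the smoothed function is about $1/(\text{smoothing scale})$ times the diagonal increment, so this term is absorbed into the diagonal term via the choice of scale. The constant $p\log p$ comes from the sharp Rosenthal constant $p/\log p$ (Johnson--Schechtman--Zinn).

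\emph{Main obstacle.} The hardest step is Step 3, specifically controlling the square-function term $\|(\sum_j|\partial_j\mathcal{A}f|^2)^{1/2}\|_p$ by $\|T^\varepsilon_{[n]}f-f\|_p$ with constants uniform in $n$. This is essentially a convolution inequality for $E_{[n]}$, of the kind the present paper strengthens. I would try a Stein/Burkholder martingale-transform argument, filtering coordinates one at a time, and get the needed constant from the martingale square-function inequality. The lower bound on $m$ is what ensures the smoothing scale fits inside $\mathbb{Z}_{4m}$, so that wraparound effects are negligible.
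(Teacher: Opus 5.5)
The paper does not prove this statement; it quotes it from Naor--Schechtman as background. Judged on its own, your outline has a genuine gap, and the gap sits in the step you treat as routine.

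In Step 3 you say that each diagonal step $\mathcal{A}f(y+\varepsilon_S)-\mathcal{A}f(y)$ \emph{is} the Rademacher sum $\sum_{j\in S}\varepsilon_j\partial_j\mathcal{A}f(y)$. That is false. As a function of $\varepsilon$, the increment has Walsh components of all orders up to $k$ in $(\varepsilon_j)_{j\in S}$. Only its first-order component, $\sum_{j\in S}\varepsilon_jD_jE_{S\setminus\{j\}}\mathcal{A}f(y)$, is a Rademacher sum. The real difficulty is bounding the whole increment in $L_p$ by a Rademacher sum of suitably smoothed partials, uniformly in $n$ and $k$. This is what Naor--Schechtman's Question~6.1 asks for, and Theorem~\ref{NS-main} proves it in strong form. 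Apply it in the coordinates of $S$ to $E_{[n]\setminus S}f$, whose partials are again $b^{(j)}_f$ for $j\in S$. This gives
\[
\bigl\|E_{[n]}f(x+\varepsilon_S)-E_{[n]}f(x-\varepsilon_S)\bigr\|_{L_p}\le(p^*-1)\Bigl\|\sum_{j\in S}\varepsilon_j b^{(j)}_f\Bigr\|_{L_p}.
\]
This is an inequality, not an identity, and it needs the martingale-transform argument of this paper. A symptom of the gap is that your sketch never genuinely uses the hypothesis on $m$. There is no wraparound to control: all operators are translation-invariant averages on the torus, and the telescoping is exact. Once the linearization is done as above, one needs only $m\ge\sqrt{n/k}$, which is the conjectured range.

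You have also reversed the direction of the paper's inequality and misplaced the difficulty. Theorem~\ref{NS-main} bounds the diagonal increment of the smoothed function by the Rademacher sum of partials, which is what Step 3 needs. It does not bound the square function by the diagonal increment. That second bound is elementary if you use $b^{(j)}_f=2D_jE_{[n]\setminus\{j\}}f$. One checks that $b^{(j)}_f(x)=2\mathbb{E}_\varepsilon[\varepsilon_j(f(x+\varepsilon)-f(x))]$. Bessel's inequality in $L_2(\Omega_n)$ for each fixed $x$, followed by $\|\cdot\|_{L_2(\Omega_n)}\le\|\cdot\|_{L_p(\Omega_n)}$ for $p\ge2$, then gives $\|(\sum_j|b^{(j)}_f|^2)^{1/2}\|_p\le2\|T^\varepsilon_{[n]}f-f\|_p$. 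No smoothing scale is involved.

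Central differences matter here. With $\partial_j=T_j-I$ and $\mathcal{A}=E^t_{[n]}$, your claimed bound fails for $f(x)=(-1)^{x_1+x_2}$: this function satisfies $T^\varepsilon_{[n]}f=E^t_{[n]}f=f$, yet $T_1f-f=-2f$.

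Multi-step smoothing and the Markov-type estimate are also unnecessary, since $\|f-E_{[n]}f\|_p\le\|T^\varepsilon_{[n]}f-f\|_p$ by Jensen. Your two proposed scales, $m/\sqrt{k}$ and $\sqrt{t}\asymp m\sqrt{k}/n$, are in any case inconsistent with each other.

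A repaired version of your plan:
\begin{enumerate}[(i)]
\item Take $\mathcal{A}=E_{[n]}$.
\item Telescope $E_{[n]}f(x+2m\varepsilon_S)-E_{[n]}f(x)$ into $m$ centered steps at the points $x+(2s+1)\varepsilon_S$, and apply the display to each.
\item Apply Rosenthal over random $S$. Your first term is fine, since $\|b^{(j)}_f\|_p\le2\|T_jf-f\|_p$; the second term is handled by the Bessel bound.
\end{enumerate}
This proves the statement, and in fact for all $m\ge\sqrt{n/k}$.
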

\noindent In the same paper, Naor and Schechtman showed that the scaling appears in the metric $X_{p}$ inequality is necessary (see \cite[Propostion 1.4]{N-S2016}). This leads them to conjecture that $L_{p}$ space is an $X_{p}$ metric space with optimal scaling parameter.
\begin{conj}[Naor--Schechtman \cite{N-S2016}]\label{N-S conjecture}
For every $2<p<\infty$ there exist positive constants $\alpha_{p}$ and $c_{p}$ such that for every $n$, $m\in \mathbb{N}$ with $k\in[n]$, $m\geq C_{p}\sqrt{n/k}$, then for every $f:\mathbb{Z}^{n}_{4m}\to \mathbb{R}$ we have
\[
\begin{split}
&\frac{\alpha_{p}}{\binom{n}{k}}\sum_{\substack{S\subseteq [n] \\|S|=k}}\frac{\mathbb{E}\left[\left|f(x+2m\varepsilon_{S})-f(x)\right|^{p}\right]}{m^{p}}\\
\leq& c_{p}\left(\frac{k}{n}\sum_{j=1}^{n}\left\|T_{j}f-f\right\|_{L_{p}(\Z\times \Omega_{n})}+\left(\frac{k}{n}\right)^{p/2}\mathbb{E}\left\|T^{\varepsilon}_{[n]}f-f\right\|^{p}_{L_{p}(\Z\times \Omega_{n})}\right),
\end{split}
\]
where the expectation is with respect to $(x,\varepsilon)\in \mathbb{Z}^{n}_{4m}\times \{-1,1\}^{n}$ chosen unifromly at random.
\end{conj}

In \cite[Section 6]{N-S2016}, Naor and Schechtman proposed an approach to Conjecture \ref{N-S conjecture} based on a convolution inequality, showing
that the inequality formulated in \cite[Question 6.1]{N-S2016} would imply the conjecture. This convolution inequality therefore provides an analytic route to the optimal scaling and its consequences for the nonlinear geometry of $L_{p}$ spaces. Subsequently, Naor \cite{Na2016} resolved the scaling conjecture using Lust-Piquard's bounds for discrete Riesz transforms \cite{LP1998},
while explicitly noting that his argument left Question $6.1$ unresolved. More recently, Areshidze \cite{Are2026} obtained the optimal $p/\log p$ bounds for higher-order Rademacher chaos via a martingale argument. Related developments have also been made in the study of discrete Riesz transform estimates. Domelevo, Ivanisvili, Petermichl, and Volberg \cite{D-I-P-V2026} gave a commutative Bellman-function proof of the discrete Riesz transform bound for $2\leq p<\infty$, with a constant bounded by $C(p-1)$, where $C$ is universal. In the complementary range $1<p<2$, Jiao, Luo, Zanin, and Zhou \cite{JLZZ2026} introduced an exponential lifting technique and
combined it with martingale methods to establish the sharp fractional Riesz estimate, while Xu and Zhang \cite{X-Z2026} independently established the same estimate using noncommutative BMO theory, thereby resolving the problem posed by Efraim and Lust-Piquard \cite{ELP2008}.

To state \cite[Question 6.1]{N-S2016}, we introduce two basic mappings. For $j\in [n]$ and every $f:\Z\to \mathbb{R}$ and $j\in [n]$, we define functions $H_{f}: \Z\times \Omega_{n}\to \mathbb{R}$ by
\[
H_{f}(x,\varepsilon)=(E_{[n]} f)(x+\varepsilon)-(E_{[n]} f)(x-\varepsilon),
\]
\[
b^{(j)}_{f}(x)\coloneqq E_{[n]\setminus\{j\}}f(x+e_{j})-E_{[n]\setminus \{j\}}f(x-e_{j})=2D_{j}E_{[n]\setminus\{j\}}(f)(x).
\]
Question $6.1$ of \cite{N-S2016} asks whether there exists a constant $c_{p}>0$ such that for any $n$, $m\in \mathbb{N}$ the following inequality holds for every $f:\Z\to \mathbb{R}$
\begin{equation}\label{concolution inequality of N-S}
\|H_{f}\|_{L_{p}(\Z\times\Omega_{n})}\leq c_{p}\left(\left\|\sum_{j=1}^{n}\varepsilon_{j}b^{(j)}_{f}\right\|_{L_{p}(\Z\times \Omega_{n})}+\sum_{j=1}^{n}\left\|T_{j}f-f\right\|_{L_{p}(\Z\times \Omega_{n})}\right).
\end{equation}

In this paper, we settle the problem of Naor and Schechtman \cite[Question 6.1]{N-S2016} by establishing the following strong convolution inequality.

\begin{thm}\label{NS-main}
For every $1<p<\infty$ and every $n$, $m\in \N$, the following inequality holds for every $f:\Z\to \mathbb{R}$
\begin{equation}
\left\|H_{f}\right\|_{L_{p}(\Z\times \Omega_{n})}\leq (p^{*}-1)\left\|\sum_{j=1}^{n}\varepsilon_{j}b^{(j)}_{f}\right\|_{L_{p}(\Z\times \Omega_{n})},
\end{equation}
where $p^{*}=\max\{p,\frac{p}{p-1}\}$.
\end{thm}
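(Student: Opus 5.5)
The plan is to realize both sides of Theorem \ref{NS-main} on one enlarged probability space, so that $H_f$ becomes a martingale that is differentially subordinate to (or a conditional expectation of) a martingale with the same distribution as $\sum_j\varepsilon_jb^{(j)}_f$. Burkholder's sharp inequality for differentially subordinate martingales then gives exactly the constant $p^*-1$.

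\textbf{Step 1: telescoping.} Write $E=E_{[n]}$ and telescope the difference $T^{\varepsilon}_{[n]}Ef-T^{-\varepsilon}_{[n]}Ef$, switching one coordinate at a time from $-\varepsilon_k$ to $\varepsilon_k$:
\[
H_f(x,\varepsilon)=\sum_{k=1}^{n}T^{\varepsilon}_{[k-1]}T^{-\varepsilon}_{[n]\setminus[k]}\bigl(T_k^{\varepsilon_k}-T_k^{-\varepsilon_k}\bigr)Ef(x)
=\sum_{k=1}^{n}\varepsilon_k\,T^{\varepsilon}_{[k-1]}T^{-\varepsilon}_{[n]\setminus[k]}\,2D_kE_k E_{[n]\setminus\{k\}}f(x).
\]
Since $D_kE_k=E_kD_k$ and $2D_kE_{[n]\setminus\{k\}}f=b^{(k)}_f$, the $k$-th summand equals $\varepsilon_k\,T^{\varepsilon}_{[k-1]}T^{-\varepsilon}_{[n]\setminus[k]}E_kb^{(k)}_f(x)$. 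This summand does not depend on $\varepsilon_k$ except through the factor $\varepsilon_k$. So, for fixed $x$ and with the $\varepsilon_j$ revealed in the order $1,\dots,n$, it looks like a martingale difference. The defect is that it also depends on the future signs $\varepsilon_{k+1},\dots,\varepsilon_n$.

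\textbf{Step 2: the enlarged space.} To remove the dependence on future signs, I would use the translation invariance of $\nu$ and introduce auxiliary independent signs $\delta\in\Omega_n$ that realize the averages $E_j$, so that $E_jg(x)=\mathbb{E}_{\delta_j}g(x+\delta_je_j)$. On $\Z\times\Omega_n\times\Omega_n$ with variables $(x,\varepsilon,\delta)$, I would choose a filtration $\mathcal F_k$ generated by $\varepsilon_1,\dots,\varepsilon_k$ together with suitable combinations of $x$, $\delta$ and the remaining signs. The choice should make the following true:
\begin{itemize}
\item the process $G_k=\sum_{j\le k}\varepsilon_j\,b^{(j)}_f(y)$, where $y$ is a uniformly distributed shifted point, is a martingale whose terminal value $G_n$ has the same distribution as $\sum_j\varepsilon_jb^{(j)}_f(x)$;
\item the $k$-th summand of $H_f$ from Step 1 equals $\mathbb{E}[\varepsilon_kb^{(k)}_f(y)\mid\mathcal G]$ for a fixed sub-$\sigma$-algebra $\mathcal G$, that is, $H_f=\mathbb{E}[G_n\mid\mathcal G]$ up to a measure-preserving change of variables.
\end{itemize}
The identity $\varepsilon_j\pm\delta_j\in\{-2,0,2\}$ together with the averaging $E_j$ should allow the shift by $-\varepsilon_j$ for $j>k$ to be absorbed into a uniformly distributed shift that is independent of $\varepsilon_k$.

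\textbf{Step 3: conclusion.} If $H_f=\mathbb{E}[G_n\mid\mathcal G]$, then contractivity of conditional expectation on $L_p$ gives $\|H_f\|_p\le\|G_n\|_p$, even with constant $1$. If only the weaker structure holds, namely $H_f=\sum_k d_k$ with $d_k=\mathbb{E}[\varepsilon_kb^{(k)}_f(y)\mid\mathcal F'_k]$ for a second filtration, then I would use one of two results. The first is Burkholder's martingale-transform or differential-subordination theorem: pointwise $|d_k|\le|e_k|$ for the difference sequence $e_k$ of the martingale $\mathbb{E}[G_n\mid\mathcal F'_k]$. The second is the Wang / Bañuelos–Wang inequality for a martingale differentially subordinate to another, $\|\sum d_k\|_p\le(p^*-1)\|\sum e_k\|_p$. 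Combined with $\|\mathbb{E}[G_n\mid\mathcal F'_n]\|_p\le\|G_n\|_p$, either route yields the stated constant $p^*-1$.

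\textbf{Main obstacle.} The hardest step is Step 2. One must find the coupling and filtration under which the $k$-th telescoping term is genuinely adapted and is dominated, in the subordination sense, by the $k$-th term $\varepsilon_kb^{(k)}_f$. In particular, the shifts $T^{\varepsilon}_{[k-1]}$ (past) and $T^{-\varepsilon}_{[n]\setminus[k]}$ (future) must be simultaneously compatible with a single filtration. The first thing I would try is to condition on the \emph{sums} $x_j+\varepsilon_j+\delta_j$ for $j\neq k$, exploiting that $E_j$ already averages over $\delta_j$.
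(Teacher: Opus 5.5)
Your Step 1 identity is correct, but the proposal stops exactly where the proof has to happen. Step 2 only lists properties a coupling ``should'' have, and no filtration is exhibited. The missing idea is that adaptedness comes from telescoping differently, not from enlarging the space. Instead of shifting the not-yet-revealed coordinates by $-\varepsilon_l$, keep them \emph{averaged}. Set $\mathcal{M}_j=E_{[n]}E_{[n]\setminus[j]}(T^{\varepsilon}_{[j]}-T^{-\varepsilon}_{[j]})f$. Then $\mathcal{M}_0=0$, $\mathcal{M}_n=H_f$, and $T_j^{\pm\varepsilon_j}=E_j\pm\varepsilon_jD_j$ gives $\mathcal{M}_j-\mathcal{M}_{j-1}=\varepsilon_jE_{[n]\setminus[j-1]}S^{\varepsilon}_{[j-1]}b^{(j)}_f=\varepsilon_j\mathbb{E}[\mathcal{S}b^{(j)}_f\mid\mathcal{F}_{j-1}]$. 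This increment depends only on $x,\varepsilon_1,\dots,\varepsilon_j$. So $H_f$ is already a martingale on $\Z\times\Omega_n$ for the filtration revealing $x$ and then $\varepsilon_1,\dots,\varepsilon_n$, and no auxiliary signs or sums $x_j+\varepsilon_j+\delta_j$ are needed.

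Step 3 also fails as written. The $j$-th increment above is the $j$-th martingale difference of the $j$-\emph{dependent} function $\varepsilon_j\mathcal{S}b^{(j)}_f$, not of a single fixed function. The differences of the martingale generated by $\sum_l\varepsilon_l\mathcal{S}b^{(l)}_f$ carry cross terms $\varepsilon_j\mathbb{E}[\varepsilon_j\varepsilon_l\mathcal{S}b^{(l)}_f\mid\mathcal{F}_{j-1}]$ for $l\neq j$, so the pointwise domination $|d_k|\le|e_k|$ you want has no reason to hold.

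Your first route (constant $1$) is in fact impossible. Averaging any such coupling over translations and testing on characters $e^{i\langle\theta,x\rangle}$ forces the signs in $G_n$ to equal $\varepsilon$ and $y-x\in\{-1,1\}^n$. The conditional law $\kappa_\varepsilon$ of $y-x$ must then satisfy $\hat\kappa_\varepsilon(\theta)\sum_j\varepsilon_j\sin\theta_j\prod_{l\neq j}\cos\theta_l=\prod_l\cos\theta_l\,\sin\langle\theta,\varepsilon\rangle$. For $n=3$ and $m$ large, this identity extends from the frequency grid to all $\theta\in\mathbb{R}^3$. It fails at $\varepsilon=(1,1,1)$ when $\tan\theta_1+\tan\theta_2+\tan\theta_3=0$: there the left side vanishes and the right side does not.

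What the paper does instead is dualize. For $\|g\|_{L_{p'}}\le1$, the identity $d_j(g)=\varepsilon_j\mathbb{E}[\varepsilon_jg\mid\mathcal{F}_{j-1}]$ gives $\mathbb{E}[gH_f]=\sum_j\mathbb{E}[d_j(g)\,\varepsilon_j\mathcal{S}b^{(j)}_f]$. This sum is decoupled with independent Rademachers $\delta_j$, followed by H\"older's inequality. Burkholder's inequality is then applied to the transform $\sum_j\delta_jd_j(g)$ of the \emph{test function}, which is where $p^*-1$ comes from. Finally, translation invariance in $x$ removes $\mathcal{S}$.
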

\noindent The rest of the paper is devoted to providing a proof of Theorem \ref{NS-main}.

\section{The proof of the main result}
This section is devoted to establishing Theorem \ref{NS-main}, and we begin with introducing further notations. For each $A\subseteq [n]$ and $\varepsilon\in \Omega_{n}$,  let  
\[
S^{\varepsilon}_{A}\coloneqq \frac{T^{\varepsilon}_{A}+T^{-\varepsilon}_{A}}{2}.
\]
For a given $f:\Z\to \mathbb{R}$, we define
\[
\mathcal{S}f(x,\varepsilon)\coloneqq \frac{f(x+\varepsilon)+f(x-\varepsilon)}{2}=\frac{T^{\varepsilon}_{[n]}f(x)+T^{-\varepsilon}_{[n]}f(x)}{2}.
\]
In the sequel, for each $j\in[n]$, we let $\varepsilon_{j}:\{-1,1\}^{n}\to \{-1,1\}$ be the $j$-th coordinate function. Let $\mathcal{B}$ be the Borel $\sigma$-algebra on $\Z$. We define a sequence of $\sigma$-algebras on $\Z\times \Omega_{n}$ by setting
\[
\mathcal{F}_0=\{B\times \Omega_{n}: B\in\mathcal{B}\}
\]
and
\[
\mathcal{F}_{j}\coloneqq \sigma\{B\times A: B\in \mathcal{B}, A\subseteq \Omega_{j}\}\qquad  j\in [n].
\]
Then, $\mathcal{F}_{0}\subseteq \mathcal{F}_{1}\subseteq \cdots\subseteq \mathcal{F}_{n}$ forms a filtration on the probability space $\Z\times \Omega_{n}$.

The following lemma provides a representation of $\mathcal{S}(b^{(j)}_{f})$ via the conditional expectation.
\begin{lem}\label{representation 1}
For every $f:\Z\to \mathbb{R}$ and  each $j\in [n]$, the following identity holds
\[
\begin{split}
\mathbb{E}\left[\mathcal{S}b^{(j)}_{f}\Big{|}\mathcal{F}_{j-1}\right](x,\varepsilon)=E_{[n]\setminus[j-1]}S^{\varepsilon}_{[j-1]}b^{(j)}_{f}(x),\quad (x,\varepsilon)\in \Z\times \Omega_{n}.
\end{split}
\]
\end{lem}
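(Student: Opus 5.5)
Conditioning on $\mathcal{F}_{j-1}$ fixes $x$ and the coordinates $\varepsilon_1,\dots,\varepsilon_{j-1}$. It then averages uniformly over $\varepsilon_j,\dots,\varepsilon_n\in\{-1,1\}$, since $\mu$ is the uniform product measure on $\Omega_n$. For $g=b^{(j)}_f$ we therefore have
\[
\mathbb{E}\left[\mathcal{S}g\,\Big|\,\mathcal{F}_{j-1}\right](x,\varepsilon)=\frac{1}{2^{n-j+1}}\sum_{\delta\in\{-1,1\}^{[n]\setminus[j-1]}}\frac{g(x+\varepsilon'+\delta)+g(x-\varepsilon'-\delta)}{2},
\]
where $\varepsilon'=\sum_{i\le j-1}\varepsilon_ie_i$ and $\delta$ is identified with $\sum_{i\ge j}\delta_ie_i$. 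The plan is to split $x\pm\varepsilon$ as $x\pm\varepsilon'\pm\sum_{i\ge j}\varepsilon_ie_i$ and rewrite the right-hand side in operator form.

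The key observation is that the map $\delta\mapsto-\delta$ is a bijection of $\{-1,1\}^{[n]\setminus[j-1]}$. In the second term we may therefore replace $-\delta$ by $\delta$, which gives
\[
\frac{1}{2^{n-j+1}}\sum_{\delta}\frac{g(x+\varepsilon'+\delta)+g(x-\varepsilon'+\delta)}{2}.
\]
Now $\frac{1}{2^{n-j+1}}\sum_\delta h(x+\delta)=E_{[n]\setminus[j-1]}h(x)$, directly from the formula for $E_A$ given in the introduction. Applying this with $h=\tfrac12(T^{\varepsilon}_{[j-1]}g+T^{-\varepsilon}_{[j-1]}g)=S^{\varepsilon}_{[j-1]}g$ gives $E_{[n]\setminus[j-1]}S^\varepsilon_{[j-1]}g(x)$.

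All the operators involved are translations and averages of translations on an abelian group, so they commute. This means the order of $E_{[n]\setminus[j-1]}$ and $S^{\varepsilon}_{[j-1]}$ is immaterial, and the identity follows. Note that the specific structure of $b^{(j)}_f$ is irrelevant: the identity holds for any $g:\Z\to\mathbb{R}$.

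The only delicate point is making the conditional expectation formula rigorous. On the finite product space, $\mathcal{F}_{j-1}$-measurable functions are exactly those depending only on $(x,\varepsilon_1,\dots,\varepsilon_{j-1})$. The expression above has this form, and averaging it against any $\mathcal{F}_{j-1}$-measurable indicator reproduces the integral of $\mathcal{S}g$ by Fubini. Since the independence of the coordinates makes this immediate, no real obstacle is expected. The one thing to check is the convention for $j=1$: there $S^\varepsilon_{\emptyset}=\mathrm{Id}$ and $\mathcal{F}_0$ corresponds to full averaging over $\varepsilon$, which is consistent with the formula.
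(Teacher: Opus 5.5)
Your proof is correct and follows essentially the same route as the paper. Both write the conditional expectation as the average over $\varepsilon_j,\dots,\varepsilon_n$ of the two translates, use the sign-flip symmetry $\delta\mapsto-\delta$ (which the paper uses only tacitly in \eqref{expansion 3}) to recognize $T^{\pm\varepsilon}_{[j-1]}E_{[n]\setminus[j-1]}$ applied to $b^{(j)}_f$, and combine these into $E_{[n]\setminus[j-1]}S^{\varepsilon}_{[j-1]}$. Your remark that the special form $b^{(j)}_f=2D_jE_{[n]\setminus\{j\}}f$ plays no role is also right: the paper's intermediate step through $D_jE_{[n]\setminus\{j\}}$ is superfluous, and your version, valid for arbitrary $g$, is the cleaner statement.
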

\begin{proof}
By the definition of $\mathcal{F}_{j-1}$, we have 
\begin{equation}\label{expansion 1}
\begin{split}
&\mathbb{E}\left[\mathcal{S}b^{(j)}_{f}\Big{|}\mathcal{F}_{j-1}\right](x,\varepsilon)\\
=&\frac{1}{2^{n-j+1}}\sum_{\varepsilon^{\prime}\in \Omega_{n-j+1}}\frac{b^{(j)}_{f}(x+\sum_{l=1}^{j-1}\varepsilon_{l}e_{l}+\sum_{l=j}^{n}\varepsilon^{\prime}_{l-j-1}e_{l})+b^{(j)}_{f}(x-\sum_{l=1}^{j-1}\varepsilon_{l}e_{l}-\sum_{l=j}^{n}\varepsilon^{\prime}_{l-j-1}e_{l})}{2}\\
=&\frac{1}{2^{n-j+1}}\sum_{\varepsilon^{\prime}\in \Omega_{n-j+1}}\frac{\left(T^{(\varepsilon_{1},\cdots, \varepsilon_{j-1},\varepsilon^{\prime}_{1},\cdots, \varepsilon^{\prime}_{n-j+1})}_{[n]}b^{(j)}_{f}\right)(x)+\left(T^{(-\varepsilon_{1},\cdots, -\varepsilon_{j-1},-\varepsilon^{\prime}_{1},\cdots, -\varepsilon^{\prime}_{n-j+1})}_{[n]}b^{(j)}_{f}\right)(x)}{2}\\
=&D_{j}E_{[n]\setminus\{j\}}\left(\frac{1}{2^{n-j+1}}\sum_{\varepsilon^{\prime}\in \Omega_{n-j+1}}\left(T^{(\varepsilon_{1},\cdots, \varepsilon_{j-1},\varepsilon^{\prime}_{1},\cdots, \varepsilon^{\prime}_{n-j+1})}_{[n]}b^{(j)}_{f}\right)(x)\right)\\
&+D_{j}E_{[n]\setminus\{j\}}\left(\frac{1}{2^{n-j+1}}\sum_{\varepsilon^{\prime}\in \Omega_{n-j+1}}\left(T^{(-\varepsilon_{1},\cdots, -\varepsilon_{j-1},-\varepsilon^{\prime}_{1},\cdots, -\varepsilon^{\prime}_{n-j+1})}_{[n]}b^{(j)}_{f}\right)(x)\right).
\end{split}
\end{equation}
It is clear that
\begin{equation}\label{expansion 2}
\frac{1}{2^{n-j+1}}\sum_{\varepsilon^{\prime}\in \Omega_{n-j+1}}\left(T^{(\varepsilon_{1},\cdots, \varepsilon_{j-1},\varepsilon^{\prime}_{1},\cdots, \varepsilon^{\prime}_{n-j+1})}_{[n]}b^{(j)}_{f}\right)(x)=T^{\varepsilon}_{[j-1]}E_{[n]\setminus[j-1]}f(x),
\end{equation}
and
\begin{equation}\label{expansion 3}
\frac{1}{2^{n-j+1}}\sum_{\varepsilon^{\prime}\in \Omega_{n-j+1}}\left(T^{(-\varepsilon_{1},\cdots, -\varepsilon_{j-1},-\varepsilon^{\prime}_{1},\cdots, -\varepsilon^{\prime}_{n-j+1})}_{[n]}b^{(j)}_{f}\right)(x)=T^{-\varepsilon}_{[j-1]}E_{[n]\setminus[j-1]}f(x).
\end{equation}
Substituting \eqref{expansion 2} and \eqref{expansion 3} into \eqref{expansion 1} yields that
\[
\mathbb{E}\left[\mathcal{S}b^{(j)}_{f}\Big{|}\mathcal{F}_{j-1}\right](x,\varepsilon)=E_{[n]\setminus[j-1]}S^{\varepsilon}_{[j-1]}b^{(j)}_{f}(x),
\]
where we used the fact that all operators are commute with each other and the fact that $b^{(j)}_{f}=2D_{j}E_{[n]\setminus\{j\}}f$.
\end{proof}

Applying Lemma \ref{representation 1}, we obtain the following decomposition of $H_{f}$, which is one of the key ingredients in the proof of our main result.
\begin{lem}\label{main decomposition}
For every $f:\Z\to \mathbb{R}$, we have
\begin{equation}\label{decomposition of Hf}
H_{f}(x,\varepsilon)=\sum_{j=1}^{n}\varepsilon_{j}\mathbb{E}\left[\mathcal{S}b^{(j)}_{f}\Big{|}\mathcal{F}_{j-1}\right](x,\varepsilon), \quad (x,\varepsilon)\in \Z\times \Omega_{n}.
\end{equation}
\end{lem}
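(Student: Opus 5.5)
By Lemma \ref{representation 1}, the right-hand side of \eqref{decomposition of Hf} equals $\sum_{j=1}^{n}\varepsilon_{j}E_{[n]\setminus[j-1]}S^{\varepsilon}_{[j-1]}b^{(j)}_{f}(x)$. Since all operators commute, $b^{(j)}_{f}=2D_{j}E_{[n]\setminus\{j\}}f$, and $E_{j}^{2}\ne E_j$ in general, I will keep track of powers of $E$ carefully. Nothing in the argument is specific to real-valued $f$.

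The plan is to write $H_{f}$ as a telescoping sum over coordinates. Set $g=E_{[n]}f$ and, for $0\le j\le n$,
\[
g_{j}(x,\varepsilon)\coloneqq T^{\varepsilon}_{[j]}T^{-\varepsilon}_{[n]\setminus[j]}\,g(x)=g\Big(x+\sum_{l\le j}\varepsilon_{l}e_{l}-\sum_{l>j}\varepsilon_{l}e_{l}\Big).
\]
Then $g_{n}=T^{\varepsilon}_{[n]}g$ and $g_{0}=T^{-\varepsilon}_{[n]}g$, so $H_{f}=\sum_{j=1}^{n}(g_{j}-g_{j-1})$. Each increment only flips the sign of the $j$-th shift:
\[
g_{j}-g_{j-1}=T^{\varepsilon}_{[j-1]}T^{-\varepsilon}_{[n]\setminus[j]}\big(T^{\varepsilon_{j}}_{j}-T^{-\varepsilon_{j}}_{j}\big)g=2\varepsilon_{j}\,T^{\varepsilon}_{[j-1]}T^{-\varepsilon}_{[n]\setminus[j]}D_{j}g,
\]
because $T_{j}^{\varepsilon_j}-T_j^{-\varepsilon_j}=2\varepsilon_jD_j$. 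This alone is not symmetric in $\pm\varepsilon$, so I will symmetrize. Running the telescoping in the opposite order, from $T^{-\varepsilon}_{[n]}$ to $T^{\varepsilon}_{[n]}$ by flipping coordinates $n,n-1,\dots$, gives increments $2\varepsilon_{j}T^{-\varepsilon}_{[j-1]}T^{\varepsilon}_{[n]\setminus[j]}D_{j}g$. Averaging the two expansions yields
\[
H_{f}=\sum_{j}\varepsilon_{j}\,2D_{j}\,\tfrac12\big(T^{\varepsilon}_{[j-1]}T^{-\varepsilon}_{[n]\setminus[j]}+T^{-\varepsilon}_{[j-1]}T^{\varepsilon}_{[n]\setminus[j]}\big)g .
\]

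This still depends on $\varepsilon_{l}$ for $l>j$, while the target term $E_{[n]\setminus[j-1]}S^{\varepsilon}_{[j-1]}b^{(j)}_f$ does not. So the direct identity cannot hold pointwise in this form, and I need to exploit $g=E_{[n]}f$. For $l>j$ I will use the tensorized identity $T_l^{\pm\varepsilon_l}E_l=\tfrac12(T_l^{2\varepsilon_l}+I)$ or similar. I expect this step to be the main obstacle.

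A cleaner route is to compare $\varepsilon_j$-Fourier (Walsh) expansions in $\varepsilon$ on both sides. The right-hand side of \eqref{decomposition of Hf} is a martingale transform: its $j$-th term is $\varepsilon_j$ times an $\mathcal F_{j-1}$-measurable function. So I will check that $\mathbb E[H_f\mid\mathcal F_j]-\mathbb E[H_f\mid\mathcal F_{j-1}]$ equals $\varepsilon_jE_{[n]\setminus[j-1]}S^\varepsilon_{[j-1]}b^{(j)}_f$, and that $\mathbb E[H_f\mid\mathcal F_0]=0$. The latter holds because $H_f$ is odd in $\varepsilon$.

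To compute $\mathbb E[H_f\mid\mathcal F_j]$, average over $\varepsilon_{l}$ for $l>j$. This replaces $T^{\pm\varepsilon_l}_l$ by $E_l$, giving
\[
\mathbb E[H_f\mid\mathcal F_j]=E_{[n]\setminus[j]}\big(T^{\varepsilon}_{[j]}-T^{-\varepsilon}_{[j]}\big)E_{[n]}f .
\]
Then average further over $\varepsilon_{j}$. Writing $T^{\pm\varepsilon}_{[j]}=T^{\pm\varepsilon}_{[j-1]}T_j^{\pm\varepsilon_j}$, the $\varepsilon_j$-odd part is
\[
\varepsilon_jD_j\big(T^{\varepsilon}_{[j-1]}+T^{-\varepsilon}_{[j-1]}\big)E_{[n]\setminus[j]}E_{[n]}f=\varepsilon_j\,2D_jS^\varepsilon_{[j-1]}E_{[n]\setminus[j]}E_{[n]}f .
\]
Finally, I will match this with $E_{[n]\setminus[j-1]}S^\varepsilon_{[j-1]}\,2D_jE_{[n]\setminus\{j\}}f$ by counting the powers of each $E_l$. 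Because of the mismatch in powers of $E$, this matching is where I expect bookkeeping errors or a needed reinterpretation, for instance $\mathcal S$ acting before the conditional expectation. That interpretation is what Lemma \ref{representation 1} supplies, so I will rely on it.
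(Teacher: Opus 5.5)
Your second route is the paper's proof. The paper's $\mathcal{M}_j(f)=E_{[n]}E_{[n]\setminus[j]}\bigl(T^{\varepsilon}_{[j]}-T^{-\varepsilon}_{[j]}\bigr)f$ is exactly your $\mathbb{E}[H_f\mid\mathcal{F}_j]$. The paper telescopes it using $T_j^{\pm\varepsilon_j}=E_j\pm\varepsilon_jD_j$ just as you do, with $\mathcal{M}_0=0$ and $\mathcal{M}_n=H_f$. Your computations along this route are correct: $\mathbb{E}[H_f\mid\mathcal{F}_j]$, $\mathbb{E}[H_f\mid\mathcal{F}_0]=0$ by oddness, and the increment $2\varepsilon_jD_jS^{\varepsilon}_{[j-1]}E_{[n]\setminus[j]}E_{[n]}f$ as the $\varepsilon_j$-odd part.

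As written, however, the proposal does not finish. You stop before the final identification and predict a ``mismatch in powers of $E$'' that does not exist. Doing the count you proposed, and using that the $E_l$ commute, each $E_l$ appears once for $l\le j$ and twice for $l>j$ on both sides:
\[
E_{[n]\setminus[j]}\,E_{[n]}=E_{[j]}\,E_{[n]\setminus[j]}^{2}=E_{[n]\setminus[j-1]}\,E_{[n]\setminus\{j\}}.
\]
Hence your increment is literally
\[
\varepsilon_jE_{[n]\setminus[j-1]}S^{\varepsilon}_{[j-1]}\,2D_jE_{[n]\setminus\{j\}}f=\varepsilon_jE_{[n]\setminus[j-1]}S^{\varepsilon}_{[j-1]}b^{(j)}_f,
\]
which Lemma \ref{representation 1} identifies with $\varepsilon_j\mathbb{E}[\mathcal{S}b^{(j)}_f\mid\mathcal{F}_{j-1}]$. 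This one-line operator identity is precisely the step the paper performs in its recursion.

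No reinterpretation, such as letting $\mathcal{S}$ act before conditioning, is needed. Your fallback is also misplaced: Lemma \ref{representation 1} only rewrites $\mathbb{E}[\mathcal{S}b^{(j)}_f\mid\mathcal{F}_{j-1}]$ in operator form, which you already did in your first sentence. It cannot resolve a discrepancy between two operator expressions, so ``relying on it'' at the end closes nothing.

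Insert the identity above and the proof is complete. You can simply drop the first, symmetrized two-sided telescoping. Its terms are not of the form $\varepsilon_j$ times an $\mathcal{F}_{j-1}$-measurable function, and it plays no role in the argument.
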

\begin{proof}
Note here that $H_{f}=E_{[n]}\left(T^{\varepsilon}_{[n]}-T^{-\varepsilon}_{[n]}\right)f$. For each $j\in [n]$, we define
\begin{equation}\label{decomposition 1}
\mathcal{M}_{j}(f)\coloneqq E_{[n]}E_{[n]\setminus[j]}\left(T^{\varepsilon}_{[j]}-T^{-\varepsilon}_{[j]}\right)(f),
\end{equation}
and let $\mathcal{M}_{0}(f)\equiv 0$ for convenience. Since $E_{j}=\frac{T_{j}+T^{-1}_{j}}{2}$ and $D_{j}=\frac{T_{j}-T^{-1}_{j}}{2}$, it is clear that
\begin{equation}\label{key 1}
T_{j}=E_{j}+D_{j}\qquad\mbox{and}\qquad T^{-1}_{j}=E_{j}-D_{j}.
\end{equation}
By $T^{\varepsilon}_{[j]}=T^{\varepsilon_{j}}_{j}T^{\varepsilon}_{[j-1]}$ and $T^{-\varepsilon}_{[j]}=T^{-\varepsilon_{j}}_{j}T^{-\varepsilon}_{[j-1]}$, it follows from \eqref{key 1} that
\begin{equation}\label{decomposition 2}
T^{\varepsilon}_{[j]}-T^{-\varepsilon}_{[j]}=(E_{j}+\varepsilon_{j}D_{j})T^{\varepsilon}_{[j-1]}-(E_{j}-\varepsilon_{j}D_{j})T^{-\varepsilon}_{[j-1]}.
\end{equation}
Substituting \eqref{decomposition 2} into \eqref{decomposition 1} yields that
\begin{equation}\label{decomposition 3}
\begin{split}
\mathcal{M}_{j}(f)&=E_{[n]}E_{[n]\setminus[j]}E_{j}\left(T^{\varepsilon}_{[j-1]}-T^{-\varepsilon}_{[j-1]}\right)(f)+2\varepsilon_{j}E_{[n]}E_{[n]\setminus[j]}D_{j}S^{\varepsilon}_{[j-1]}(f)\\
&=\mathcal{M}_{j-1}(f)+2\varepsilon_{j}E_{[n]}E_{[n]\setminus[j]}D_{j}S^{\varepsilon}_{[j-1]}(f)\\
&=\mathcal{M}_{j-1}(f)+\varepsilon_{j}E_{[n]\setminus[j-1]}S^{\varepsilon}_{[j-1]}b^{(j)}_{f}\\
&=\mathcal{M}_{j-1}(f)+\varepsilon_{j}\mathbb{E}\left[\mathcal{S}b^{(j)}_{f}|\mathcal{F}_{j-1}\right],
\end{split}
\end{equation}
where the last equality is due to Lemma  \ref{representation 1}. Noting $M_{n}(f)=H_{f}$,  by induction of \eqref{decomposition 3},  we obtain
\[
H_{f}=\sum_{j=1}^{n}\varepsilon_{j} \mathbb{E}\left[\mathcal{S}b^{(j)}_{f}\Big{|}\mathcal{F}_{j-1}\right].
\]
\end{proof}
The next elementary lemma provides a connection between the martingale difference operator and Lemma \ref{main decomposition}.
\begin{lem}\label{representation of martingale difference}
For each $f:\Z\times \Omega_{n}\to \mathbb{R}$, we have
\[
d_{j}(f)\coloneqq\mathbb{E}\left[f\Big{|}\mathcal{F}_{j}\right]-\mathbb{E}\left[f\Big{|}\mathcal{F}_{j-1}\right]=\varepsilon_{j}\mathbb{E}\left[\varepsilon_{j}f\Big{|}\mathcal{F}_{j-1}\right],\quad1\leq j\leq n.
\]
\end{lem}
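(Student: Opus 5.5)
The plan is to compute both conditional expectations explicitly on the product space $\Z\times\Omega_{n}$ with the uniform measure $\nu\otimes\mu$. Since $\mathcal{F}_{j}$ is generated by $\mathcal{B}$ and the first $j$ coordinates of $\varepsilon$, conditioning on $\mathcal{F}_{j}$ means averaging over the coordinates $\varepsilon_{j+1},\dots,\varepsilon_{n}$ while $x$ and $\varepsilon_{1},\dots,\varepsilon_{j}$ stay fixed. Concretely,
\[
\mathbb{E}\left[f\Big{|}\mathcal{F}_{j}\right](x,\varepsilon)=\frac{1}{2^{n-j}}\sum_{\eta\in\{-1,1\}^{n-j}}f(x,\varepsilon_{1},\dots,\varepsilon_{j},\eta).
\]
The first step is simply to record this formula, which follows from the independence of the coordinates under the uniform measure.

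The second step splits the average over $\varepsilon_{j}$ in $\mathbb{E}[f|\mathcal{F}_{j-1}]$. Write $g_{\pm}(x,\varepsilon_{1},\dots,\varepsilon_{j-1})$ for $\mathbb{E}[f|\mathcal{F}_{j}]$ evaluated with $\varepsilon_{j}=\pm1$. Then
\[
\mathbb{E}[f|\mathcal{F}_{j}]=\frac{g_{+}+g_{-}}{2}+\varepsilon_{j}\frac{g_{+}-g_{-}}{2},
\]
because for $\varepsilon_{j}=1$ the right-hand side equals $g_{+}$ and for $\varepsilon_{j}=-1$ it equals $g_{-}$. We also have $\mathbb{E}[f|\mathcal{F}_{j-1}]=\frac{g_{+}+g_{-}}{2}$, which comes from averaging the formula above in $\varepsilon_{j}$ (the tower property). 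Subtracting gives $d_{j}(f)=\varepsilon_{j}\frac{g_{+}-g_{-}}{2}$.

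The third step identifies $\frac{g_{+}-g_{-}}{2}$ with $\mathbb{E}[\varepsilon_{j}f|\mathcal{F}_{j-1}]$. Since $\varepsilon_{j}$ is $\mathcal{F}_{j}$-measurable, the tower property gives
\[
\mathbb{E}[\varepsilon_{j}f|\mathcal{F}_{j-1}]=\mathbb{E}\bigl[\varepsilon_{j}\mathbb{E}[f|\mathcal{F}_{j}]\big{|}\mathcal{F}_{j-1}\bigr]=\frac{1\cdot g_{+}+(-1)\cdot g_{-}}{2}.
\]
Multiplying by $\varepsilon_{j}$ yields the claimed identity. An equivalent route uses $\varepsilon_{j}^{2}=1$: this gives $d_{j}(f)=\varepsilon_{j}\cdot\varepsilon_{j}d_{j}(f)$, and one can then show $\varepsilon_{j}d_{j}(f)$ is $\mathcal{F}_{j-1}$-measurable and equal to $\mathbb{E}[\varepsilon_{j}f|\mathcal{F}_{j-1}]$, using $\mathbb{E}[\varepsilon_{j}|\mathcal{F}_{j-1}]=0$.

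There is no real obstacle; this is bookkeeping. The only point needing care is that $\mathcal{F}_{0}$ already contains the full $\sigma$-algebra on $\Z$, so $x$ is never averaged. With that, the case $j=1$ follows from the same formula with no $\varepsilon$-coordinates held fixed.
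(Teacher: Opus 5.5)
Your proof is correct, but it takes a different route from the paper. The paper fixes $x$ and expands $f(x,\cdot)$ in the Walsh basis $\{w_A\}_{A\subseteq[n]}$. It uses that $\mathbb{E}[\cdot|\mathcal{F}_j]$ is the projection onto $\mathrm{span}\{w_A: A\subseteq[j]\}$. Hence $d_j(f)$ is the Walsh part with $\max A=j$, i.e.\ $\varepsilon_j\sum_{B\subseteq[j-1]}\widehat{F(x)}(B\cup\{j\})w_B$. It then checks term by term that multiplying $f$ by $\varepsilon_j$ and projecting onto $\mathcal{F}_{j-1}$ gives the same coefficient sum.

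You instead work directly on the atoms of the filtration. You write the $\mathcal{F}_j$-measurable function $\mathbb{E}[f|\mathcal{F}_j]$ as $a+\varepsilon_j b$, with $a=(g_++g_-)/2$ and $b=(g_+-g_-)/2$ both $\mathcal{F}_{j-1}$-measurable. Both $\mathbb{E}[f|\mathcal{F}_{j-1}]=a$ and $\mathbb{E}[\varepsilon_j f|\mathcal{F}_{j-1}]=b$ then follow from the tower property.

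Each route has its advantage. Yours needs no Fourier expansion and avoids the Walsh-coefficient bookkeeping. It uses only that $\mathcal{F}_j=\mathcal{F}_{j-1}\vee\sigma(\varepsilon_j)$, with $\varepsilon_j$ a symmetric sign independent of $\mathcal{F}_{j-1}$, so the identity holds for any such filtration, not just this product structure. The paper's route gives a spectral description of $d_j$ as the Walsh components with $\max A=j$. This makes the $\mathcal{F}_{j-1}$-measurability of $\varepsilon_j d_j(f)$ visible at a glance, at the cost of tracking index sets.
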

\begin{proof}
The proof follows from the Walsh expansion directly. Indeed, for $f:\Z\times \Omega_{n}\to \mathbb{R}$, we define $F:\Z\to L_{2}(\Omega_{n};\mathbb{R})$ by
\[
F(x)\coloneqq f(x,\cdot),\quad x\in \Z.
\]
Since $\{w_{A}\}_{A\subseteq [n]}$ forms an orthogonal basis of $L_{2}(\Omega_{n})$, it follows that for each $x\in \Z$,  we have
\[
F(x)=\sum_{A\subseteq [n]}\widehat{F(x)}(A)w_{A}.
\]
By the definition of $\{\mathcal{F}_{j}\}_{j=0}^{n}$, we have
\[
\mathbb{E}[F|\mathcal{F}_{j}](x)=\sum_{A\subseteq [j]}\widehat{F(x)}(A)w_{A}.
\]
Thus,
\begin{equation}\label{Walsh expansion 1}
\begin{split}
d_{j}(f)(x,\cdot)&=\sum_{\substack{A\subseteq [n]\\\max(A)=j}}\widehat{F(x)}(A)w_{A}\\
&=\sum_{B\subseteq [j-1]}\widehat{F(x)}(B)w_{B\cup\{j\}}=\varepsilon_{j}\cdot\left(\sum_{B\subseteq [j-1]}\widehat{F(x)}(B)w_{B}\right).
\end{split}
\end{equation}
On th other hand, by the definition of conditional expectation, we have
\begin{equation}\label{Walsh expansion 2}
\begin{split}
\mathbb{E}\left[\varepsilon_{j} F\Big{|}\mathcal{F}_{j-1}\right](x,\cdot)&=\mathbb{E}\left[\sum_{j\in A\subseteq [n]}\widehat{F(x)}(A)w_{A\setminus\{j\}}\Big{|}\mathcal{F}_{j-1}\right]\\
&\quad +\mathbb{E}\left[\sum_{A\subseteq [n]\setminus\{j\}}\widehat{F(x)}(A)w_{A\cup\{j\}}\Big{|}\mathcal{F}_{j-1}\right]\\
&=\sum_{\substack{B\subseteq[n]\\\max(B)=j}}\widehat{F(x)}(B)w_{B\setminus\{j\}}=\sum_{B\subseteq [j-1]}\widehat{F(x)}w_{B}.
\end{split}
\end{equation}
Combining \eqref{Walsh expansion 1} with \eqref{Walsh expansion 2} yields the desired result.
\end{proof}

We now recall the \emph{sharp} inequality for martingale transform of Burkholder \cite{Bur1984}.
\begin{thm}[Burkholder]\label{Burkholder martingale transform}
Suppose that $M=\{M_{j}\}_{j=0}^{\infty}$ martingale with $M_{0}\equiv 0$. Let $d_{j}\coloneqq M_{j}-M_{j-1}$ be the martingale difference for each $j\in \mathbb{N}$. If $\{\varepsilon_{j}\}_{j=1}^{n}$ is a sequence of numbers in $\{-1,1\}$, then the following inequality holds
\[
\left\|\sum_{j=1}^{\infty}\varepsilon_{j}d_{j}\right\|_{L_{p}(\Omega)}\leq (p^{*}-1)\|M\|_{L_{p}(\Omega)},  \quad 1<p<\infty,
\]
where $p^{*}=\max\{p,p/(p-1)\}$
\end{thm}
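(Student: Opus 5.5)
We would prove Theorem \ref{Burkholder martingale transform} by Burkholder's special-function (Bellman) method. First we reduce to finitely many steps. Set $N_k=\sum_{j=1}^{k}\varepsilon_j d_j$. It suffices to show
\[
\|N_k\|_{L_p}\leq (p^{*}-1)\|M_k\|_{L_p}
\]
for every finite $k$. The general case then follows from Fatou's lemma, because $\|M_k\|_{L_p}\le \sup_k\|M_k\|_{L_p}=\|M\|_{L_p}$. We may also assume $\|M_k\|_{L_p}<\infty$, so that every $d_j\in L_p$.

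Next we introduce two functions on $\mathbb{R}^2$. The first is the target function
\[
V(x,y)=|y|^{p}-(p^{*}-1)^{p}|x|^{p}.
\]
The second is Burkholder's majorant
\[
U(x,y)=\alpha_p\bigl(|y|-(p^{*}-1)|x|\bigr)\bigl(|x|+|y|\bigr)^{p-1},\qquad \alpha_p=p\Bigl(1-\tfrac{1}{p^{*}}\Bigr)^{p-1}.
\]
We must verify three properties of $U$:
\begin{enumerate}[(i)]
\item $V\le U$ everywhere;
\item $U(x,y)\le 0$ whenever $|y|\le|x|$, and in particular $U(0,0)=0$;
\item $U$ is zigzag concave, meaning $t\mapsto U(x+th,\,y+\varepsilon th)$ is concave for each $\varepsilon\in\{-1,1\}$.
\end{enumerate}
From (iii) one obtains the following one-step inequality. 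Away from the lines where $U$ is not differentiable, take $A=U_x(x,y)$ and $B=U_y(x,y)$; on those lines, use a suitable one-sided choice of $A,B$. Then
\[
U(x+h,y+\varepsilon h)\le U(x,y)+Ah+\varepsilon Bh .
\]

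With these properties the argument is short. Put $x=M_{j-1}$, $y=N_{j-1}$, $h=d_j$ and $\varepsilon=\varepsilon_j$ in the one-step inequality, and take conditional expectations with respect to $\mathcal{F}_{j-1}$. The coefficients $A,B$ are $\mathcal{F}_{j-1}$-measurable and $\mathbb{E}[d_j|\mathcal{F}_{j-1}]=0$, so the linear terms vanish. Integrability holds because $|A|+|B|\lesssim (|x|+|y|)^{p-1}$ and Hölder's inequality applies. This gives
\[
\mathbb{E}\,U(M_j,N_j)\le \mathbb{E}\,U(M_{j-1},N_{j-1}).
\]
Iterating down to $j=1$ yields $\mathbb{E}\,U(M_k,N_k)\le U(0,0)=0$, since $M_0=N_0=0$. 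Combining with (i),
\[
\mathbb{E}\,V(M_k,N_k)\le \mathbb{E}\,U(M_k,N_k)\le 0,
\]
which is exactly $\|N_k\|_p^p\le (p^{*}-1)^p\|M_k\|_p^p$.

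The main obstacle is the elementary but delicate verification of (i) and (iii). By homogeneity and symmetry, it suffices to work on $x,y\ge0$ with $x+y=1$.

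For (iii), we would compute the second derivative along the direction $(1,\pm1)$ in each quadrant separately. In the direction where $|x|+|y|$ is locally constant, the function is linear, so concavity there is trivial. In the direction where $|x|+|y|$ moves linearly, the second derivative has a sign factor of the form $-(p^{*}-1)+\dots$, and one must check it is nonpositive. Across the axes, where $U$ has kinks, we would confirm that the one-sided derivatives jump in the concave direction.

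For (i), set $s=x/(x+y)$ and reduce to the one-variable inequality
\[
\alpha_p\bigl(1-p^{*}s\bigr)\ge (1-s)^{p}-(p^{*}-1)^{p}s^{p},\qquad s\in[0,1].
\]
Both sides agree, with equal derivatives, at $s=1/p^{*}$. The inequality then follows from convexity or concavity of the difference on each side of that point. This step requires splitting into the cases $p\ge2$ and $1<p<2$.
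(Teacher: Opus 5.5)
The paper gives no proof of this result; it quotes it from Burkholder \cite{Bur1984}. Your outline is precisely Burkholder's special-function argument: the function $U$, its zigzag concavity, and the telescoping $\mathbb{E}\,U(M_j,N_j)\le\mathbb{E}\,U(M_{j-1},N_{j-1})$. The approach is therefore the standard one and is sound, including the handling of the kinks and the choice of $A,B$ in the one-step inequality.

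One step of your sketch needs repair, namely the verification of (i). The function
\[
\phi(s)=\alpha_p(1-p^{*}s)-(1-s)^{p}+(p^{*}-1)^{p}s^{p}
\]
is \emph{not} convex on both sides of $s=1/p^{*}$. Moreover, concavity on a side, combined with $\phi(1/p^{*})=\phi'(1/p^{*})=0$, would give the wrong sign. What is true is that
\[
\phi''(s)=p(p-1)\bigl[(p^{*}-1)^{p}s^{p-2}-(1-s)^{p-2}\bigr]
\]
changes sign exactly once in $(0,1)$:
\begin{enumerate}[(a)]
\item for $p>2$, from negative to positive at some $s_0<1/p^{*}$;
\item for $1<p<2$, from positive to negative at some $s_1>1/p^{*}$.
\end{enumerate}
Tangency at $1/p^{*}$ gives $\phi\ge0$ on the convex piece. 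On the concave piece you additionally need one endpoint value: $\phi(0)=\alpha_p-1\ge0$ for $p>2$, and $\phi(1)\ge0$ for $p<2$. Both reduce to the elementary inequality $(r-1)^{r-1}\ge r^{r-2}$ with $r=p^{*}\ge2$.

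Finally, your property (ii) gives slightly more than the stated theorem. Starting the telescoping at $\mathbb{E}\,U(M_0,0)\le0$ yields the bound with $N_0=0$ and $M_0$ arbitrary. That is the form actually used in the proof of Theorem \ref{NS-main}, where $\mathbb{E}[g|\mathcal{F}_0]$ need not vanish.
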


We now provide the proof of Theorem \ref{NS-main}. 
\begin{proof}[Proof of Theorem \ref{NS-main}]
By Lemma \ref{main decomposition}, we have
\[
H_{f}(x,\varepsilon)=\sum_{j=1}^{n}\varepsilon_{j}\mathbb{E}\left[\mathcal{S}b^{(j)}_{f}|\mathcal{F}_{j-1}\right](x,\varepsilon).
\]
Hence, for each $g\in L_{p^{\prime}}(\Z\times \Omega_{n})$ with $\|g\|_{L_{p^{\prime}}}\leq 1$ and $p^{\prime}=\frac{p}{p-1}$, we have
\begin{equation}\label{martingale transform 1}
\begin{split}
\mathbb{E}_{\nu\otimes \mu}\left[gH_{f}\right]&=\mathbb{E}_{\nu\otimes \mu}\left[\left(\sum_{j=1}^{n}\mathbb{E}[\varepsilon_{j} g|\mathcal{F}_{j-1}]\right)\cdot\mathcal{S}b^{(j)}_{f}\right]\\
&=\mathbb{E}_{\nu\otimes \mu}\left[\sum_{j=1}^{n}d_{j}(g)\cdot \varepsilon_{j} \mathcal{S}b^{(j)}_{f}\right],
\end{split}
\end{equation}
where we used Lemma \ref{representation of martingale difference}.

Let $(\delta^{\prime}_{1},\cdots, \delta^{\prime}_{n})$ be an i.i.d. Rademacher sequence (on a probability space $(\Omega,\mathcal{B},\mu^{\prime})$), which is independent of $(\varepsilon_{1},\cdots,\varepsilon_{n})$. It is clear that for each $\{\alpha_{j}\}_{j=1}^{n}$ and $\{\beta_{j}\}_{j=1}^{n}\subseteq \mathbb{R}$ we have
\begin{equation}\label{martingale transform 2}
\sum_{j=1}^{n}\alpha_{j}\beta_{j}=\mathbb{E}_{\mu^{\prime}}\left[\left(\sum_{j=1}^{n}\delta_{j}\alpha_{j}\right)\cdot\left(\sum_{j=1}^{n}\delta_{j}\beta_{j}\right)\right].
\end{equation}
Substituting \eqref{martingale transform 2} into \eqref{martingale transform 1}  yields
\begin{equation}\label{martingale transform 3}
\begin{split}
|\mathbb{E}_{\nu\otimes \mu}\left[gH_{f}\right]|&=\mathbb{E}_{\nu\otimes \mu\otimes \mu^{\prime}}\left[\left(\sum_{j=1}^{n}\delta_{j}d_{j}(g)\right)\cdot \left(\sum_{j=1}^{n}\delta_{j}\varepsilon_{j} \mathcal{S}b^{(j)}_{f}\right)\right]\\
&\leq \left\|\sum_{j=1}^{n}\delta_{j}d_{j}(g)\right\|_{L_{p^{\prime}}(\Z\times \Omega_{n}\times \Omega)}\cdot\left\|\sum_{j=1}^{n}\delta_{j}\varepsilon_{j} \mathcal{S}b^{(j)}_{f}\right\|_{L_{p}(\Z\times \Omega_{n}\times \Omega)}
\end{split}
\end{equation}
Note that for each $j\in [n]$ the random variable $\delta_{j}\varepsilon_{j}$ has the same distribution of $\varepsilon_{j}$, then it follows that
\begin{equation}\label{martingale transform 4}
\left\|\sum_{j=1}^{n}\delta_{j}\varepsilon_{j} \mathcal{S}b^{(j)}_{f}\right\|_{L_{p}(\Z\times \Omega_{n}\times \Omega)}=\left\|\sum_{j=1}^{n}\varepsilon_{j}\mathcal{S}b^{(j)}_{f}\right\|_{L_{p}(\Z\times \Omega_{n})}\leq \left\|\sum_{j=1}^{n}\varepsilon_{j}b^{(j)}_{f}\right\|_{L_{p}(\Z\times \Omega_{n})},
\end{equation}
where we used the fact that $\mathcal{S}$ is a contraction.

We now apply Theorem \ref{Burkholder martingale transform} to bound $\left\|\sum_{j=1}^{n}\delta_{j}d_{j}(g)\right\|_{L_{p^{\prime}}(\Z\times \Omega_{n}\times \Omega)}$. For each $\omega\in \Omega$, by Theorem \ref{Burkholder martingale transform}, we have
\[
\left\|\sum_{j=1}^{n}\delta_{j}(\omega)d_{j}(g)\right\|_{L_{p^{\prime}}(\Z\times \Omega_{n})}\leq (p^{*}-1)\|g\|_{L_{p^{\prime}}(\Z\times \Omega_{n})}.
\]
By the Fubini theorem, it yields that
\begin{equation}\label{martingale transform 5}
\begin{split}
\left\|\sum_{j=1}^{n}\delta_{j}d_{j}(g)\right\|_{L_{p^{\prime}}(\Z\times \Omega_{n}\times \Omega)}&=\left(\mathbb{E}_{\mu^{\prime}}\left[\left\|\sum_{j=1}^{n}\delta_{j}(\omega)d_{j}(g)\right\|^{p^{\prime}}_{L_{p^{\prime}(\Z\times \Omega_{n})}}\right]^{p^{\prime}}\right)^{1/p^{\prime}}\\
&\leq (p^{*}-1)\|g\|_{L_{p^{\prime}}(\Z\times \Omega_{n})}\leq p^{*}-1.
\end{split}
\end{equation}
Combining \eqref{martingale transform 3}, \eqref{martingale transform 4}, and \eqref{martingale transform 5} together yields
\[
\|H_{f}\|_{L_{p}(\Z\times \Omega_{n})}=\sup_{\|g\|_{L_{p^{\prime}}(\Z\times \Omega_{n})}\leq 1}|\mathbb{E}_{\nu\otimes \mu}[gH_{f}]|\leq (p^{*}-1)\left\|\sum_{j=1}^{n}\varepsilon_{j}b^{(j)}_{f}\right\|_{L_{p}(\Z\times \Omega_{n})},
\]
which completes our proof.
\end{proof}

\begin{rem}
To bound the term $\left\|\sum_{j=1}^{n}\delta_{j}d_{j}(g)\right\|_{L_{p^{\prime}}(\Z\times \Omega_{n}\times \Omega)}$, we may use the Burkholder-Gundy inequality directly. However, the constant $c_{p}$ we obtain in this way will be worse than the sharp inequality of Burkholder as we shown as above.
\end{rem}

\section*{Acknowledgment}

This work was supported by the National Natural Science Foundation of China (Grant Nos. 12125109 \& W2411005 \& 12671165); the Natural Science Foundation of Hunan Province (Grant Nos: 2025ZYJ002, 2024JJ1010 \& 2024RC3040); the Scientific Research Fund of Hunan Provincial Education Department (Grant Nos. 25A0009 \& 25B0008).

During the preparation of this work, the authors used GPT-5.6 Sol to assist with language editing and refinement of mathematical arguments. The author takes full responsibility for the content of this paper.


\providecommand{\bysame}{\leavevmode\hbox to3em{\hrulefill}\thinspace}
\providecommand{\MR}{\relax\ifhmode\unskip\space\fi MR }
\providecommand{\MRhref}[2]{%
  \href{http://www.ams.org/mathscinet-getitem?mr=#1}{#2}
}
\providecommand{\href}[2]{#2}

\end{document}